\documentclass{article}

\usepackage{amsmath,amssymb,url}
\usepackage{enumitem} 
\usepackage{framed, xcolor, nicefrac}
\usepackage{amsopn, amsthm}
\usepackage{tikz-cd}
\usetikzlibrary{graphs, graphs.standard, shapes.geometric, decorations.pathreplacing}

\numberwithin{equation}{section}

\theoremstyle{plain}
\newtheorem{theorem}{Theorem}[section]
\newtheorem{lemma}[theorem]{Lemma}
\newtheorem{claim}[theorem]{Claim}

\theoremstyle{definition}

\renewenvironment{proof}[1][\unskip]{%
\par
\noindent
\textbf{Proof #1.}
\noindent}
{\hfill$\blacksquare$

\bigskip}

\usepackage{etoolbox}

\newcommand{\defqedsymbol}{\hfill\ensuremath{\square}} 

\AtBeginEnvironment{definition}{\pushQED{\defqedsymbol}}
\AtEndEnvironment{definition}{\popQED}

\let\<=\langle
\let\>=\rangle
\def\land{\wedge}       
\def\lor{\vee}

\def\lnot{\neg}

\let\models=\vDash

\def\forces{\Vdash}
\def\notforces{\nVdash}
\renewcommand{\setminus}{\smallsetminus}

\usepackage[T1]{fontenc}
\usepackage[utf8]{inputenc}

\reversemarginpar
\usepackage[colorinlistoftodos]{todonotes}

\def\A{\mathfrak{A}}
\def\B{\mathfrak{B}}
\def\F{\mathfrak{F}}
\def\V{\mathbf{V}}
\def\H{\mathbf{H}}
\def\S{\mathbf{S}}
\def\P{\mathbf{P}}

\def\K{\mathsf{K}}
\def\E{\mathbf{E}}

\def\Sg{\mathsf{Sg}}
\def\Con{\mathsf{Con}}
\def\Fr{\mathbf{F}}

\begin{document}


\title{Uncountably many maximally consistent neighborhood complete congruential modal logics}

\author{Zal\'an Gyenis\thanks{Jagiellonian University}\and Krzysztof Krawczyk\thanks{Jagiellonian University}}
\date{\today}

\maketitle

\begin{abstract}
	We solve an open problem posed by Peter Fritz in \cite{Fritz} by proving that there are uncountably many C-Post complete congruential modal logics which are neighborhood complete. The method is algebraic: we construct an uncountable sequence of varieties of modal algebras which are minimal in the lattice of all subvarieties of modal algebras and are generated by a single complete and atomic modal algebra. 
\end{abstract}

\noindent {\bf Subject classifications:} 03B45, 06E25.

\noindent {\bf Keywords:} Post completeness, neighborhood completeness, modal logic, modal algebras, algebraic logic.


\section{The problem}
David Makinson isolated four modal logics, each being semantically complete with respect to a single two element algebra, which exhaust all Post complete extensions in various lattices of modal logics \cite{Makinson}. In particular, two of them form the only Post complete extensions in the lattice of normal modal logics. The question whether this result extends to the lattice of congruential modal logics has been resolved negatively by Humberstone \cite{Humberstone}. He provided two examples of two-element neighborhood frames whose theories are not included in any of the four Makinson's logics. Since neighborhood frames always validate the congruentiality rule, there must be more Post complete extensions in the lattice of congruential modal logics.

In 2016, Peter Fritz has shown that the number of C-Post complete congruential modal logics in fact equals continuum \cite[Theorem 4.1]{Fritz}. Nonetheless, his proof is inconclusive with respect to the question whether the logics from his construction are neighborhood complete.  Therefore, he poses two open problems \cite[p. 293]{Fritz}
\begin{quote}
{\it [...]this raises the question how widespread
such incompleteness is among logics Post complete in the set of congruential
modal logics. The following result gives a partial answer, by showing that
there are infinitely many C-Post complete logics which are the logic of a class
of neighborhood frames. What their precise number is will be left open, as well
as the question whether there are any C-Post complete logics which fail to be
the logic of a class of neighborhood frames, and if so, how many such logics
there are.}
\end{quote}
He constructed a countable sequence of  C-Post complete congruential modal logics, each being a theory of a single finite modal matrix \cite[Theorem 4.2]{Fritz}. Any finite modal matrix/algebra has a modally equivalent neighborhood frame, therefore his construction yields the existence of countably infinitely many neighborhood complete, C-post complete congruential modal logics.

The second question has been recently solved in \cite{Krawczyk2026} where it was proven that there are uncountably many C-Post complete congruential modal logics that are not only neighborhood incomplete but also neighborhood unsound (strongly incomplete). In the current paper, we solve the first problem by showing that there are uncountably many C-Post complete and neighborhood complete congruential modal logics.

We will rely on algebraic techniques, thus finding Post-complete congruential modal logics amounts to finding minimal varieties of modal algebras.
Any minimal variety is generated by a zero-generated simple algebra (see Lemma \ref{zero-generated} below). But such algebras are countable. What is more, any countable algebra which is based on a neighborhood frame (i.e. atomic and complete) is finite. Therefore, there are only countably many varieties generated by a single algebra of this form. This poses a natural limitation in establishing the exact number of Post-complete neighborhood complete modal logics. If this number turns out to be continuum (which seems safe to assume) most of these logics cannot be identified with the theories of a single finite neighborhood frame. Therefore, we need to somehow maneuver around this obstacle.

Our strategy involves a construction based on a specifically designed continuous mappings from a Cantor's space into itself. For each subset $S$ of natural numbers we will construct a different continuous mapping. The Cantor space  (understood as a powerset of natural numbers) together with a unary operation given by one of such continuous mappings will comprise a modal algebra with an equivalent neighborhood frame. This will give us uncountably many neighborhood complete congruential modal logics. To show C-Post completeness, we will prove that each of such defined algebras has a modally equivalent 0-generated simple modal subalgebra.

\paragraph{Preliminaries.}
The standard propositional modal language is given by a countably infinite set of propositional letters and the usual logical connectives $\land$, $\lnot$, $\bot$, and $\Box$. Other connectives such as $\top$, $\to$, $\lor$, etc. are taken as defined in the familiar ways.
A set $\Lambda$ of formulas is a \emph{modal logic} if it contains all propositional tautologies, and is closed under modus ponens and uniform substitution. The modal logic $\Lambda$ is \emph{congruential} when additionally it is closed under the congruentiality rule: if $p\leftrightarrow q\in \Lambda$, then $\Box p\leftrightarrow \Box q\in
\Lambda$. The least congruential modal logic is denoted by $\E$.

A neighborhood frame $\F$ is a tuple $\<W, N\>$ such that $W$ is a nonempty set and $N:W\to \wp(\wp(W))$ is a function. A function $V$ assigning subsets of $W$ to propositional variables is called an evaluation, and an evaluation and a frame gives rise to a recursive definition of truth of formulas: for $w\in W$ define
\begin{align*}
\F,V,w&\forces p &\text{ iff }& w\in V(p)\qquad \text{ for propositional variable } p \\
\F,V,w&\forces \varphi\land\psi &\text{ iff }& \F,V,w\forces \varphi \text{ and } \F,V,w\forces \psi \\
\F,V,w&\forces \lnot\varphi &\text{ iff }& \F,V,w\notforces \varphi \\
\F,V,w&\forces \Box\varphi &\text{ iff }& \{x\in W: \F,V,x\forces\varphi\}\in N(w).
\end{align*}
The formula $\varphi$ is \emph{valid} on the frame $\F$, in symbols $\F\forces\varphi$, if for every evaluation $V$ and every world $w\in W$ we have $\F,V,w\forces\varphi$. For any neighborhood frame $\F$ the logic of the frame $\{\varphi: \F\forces\varphi\}$ is a congruential modal logic. 

The logic $\Lambda$ is \emph{neighborhood-complete} if there is a 
class $\mathsf{F}$ of neighborhood frames such that $\Lambda = \{\varphi: \F\forces\varphi$ for all $\F\in\mathsf{F}\}$. The logic $\Lambda$ is \emph{C-Post complete} if it is maximal consistent among congruential modal logics, that is, $\Lambda$ is congruential,  consistent, and has no proper consistent congruential extension.


\section{The solution}

We employ the algebraic method, which is briefly recalled here.
Given a class of similar algebras $\K$, the familiar class operators $\H(\K)$, $\S(\K)$, $\P(\K)$ will denote closure of $\K$ under taking homomorphic images, subalgebras and direct products respectively. A class of algebras $\K$ is a variety iff $\K=\H\S\P(\K)$. Due to Birkhoff's theorem, varieties are precisely the classes of algebras definable by equations. We will shortly write $\V(\K)$ for $\H\S\P(\K)$. Given an algebra $\A$, $\Con(\A)$ will denote the set of congruences of $\A$. In particular, $\Delta_\A$ and $\nabla_\A$ will denote the least and the largest elements of $\Con(\A)$, i.e. the identity congruence and the full congruence $A^2$, respectively. If $\Con(\A)=\{\Delta_\A,\nabla_\A\}$, then we say that $\A$ is simple. Given a subset $X\subseteq A$, $\Sg^\A(X)$ will denote the subalgebra of $\A$ generated by $X$. If $\A=\Sg^\A(X)$, we say that $X$ generates $\A$. We will say that an algebra $\A$ is $\kappa$ generated when $\A=\Sg^\A(X)$ and the cardinality of $X$ is $\kappa$. In particular, we say that $\A$ is zero-generated if $\A=\Sg^\A(\emptyset)$. Note that zero-generated algebras exist only if the similiarity type contains at least one constant symbol.
Given a class of algebras $\K$, the free algebra in $\S\P(\K)$ over a set of generators $X$ will be denoted by $\Fr_\K(X)$. A variety is trivial when it is generated by a one-element algebra (trivial algebra). A variety $\V$ will be called minimal when it is non-trivial and has only two subvarieties: itself and the trivial variety.  For the fundamentals of universal algebra, we refer the reader to \cite{BurrisSankappanavar1981}.

By a \emph{modal algebra} we understand a structure 
$\A = \<A, \land, -, \bot, \Box\>$, where $\<A, \land, -, \bot\>$ is a Boolean algebra,\footnote{We make use of other standard Boolean operations such as $\lor$, $\to$, $\top$, etc. These are defined in the usual way.}  and $\Box:A\to A$ is an arbitrary unary function.\footnote{In parts of the literature a modal algebra is a Boolean algebra with an extra \emph{operator} that is \emph{normal} and \emph{additive}, see e.g. \cite[Def. 10.1]{Ono2019}). In some other parts of the literature, e.g. in \cite{Krawczyk2023}, a modal algebra
is just a Boolean algebra with an arbitrary extra unary function. Yet in some other parts of the literature these structure are called Boolean frames, see \cite{Hansson1973,GyZMMOO-BSL,GyZMM-NDJFL}.} The class of modal algebras is an equational class (variety) defined by the Boolean equations. The congruentiality rule written in the quasi-equational form $x=y\rightarrow \Box x=\Box y$ holds automatically in any modal algebra ($\Box$ being a function). The logic $\E$ is Blok-Pigozzi algebraizable \cite{BlokPigozzi} w.r.t. the variety of modal algebras (cf. \cite{Krawczyk2023}). For this reason, the lattice of congruential extensions of $\E$ is dually isomorphic to the lattice of subvarieties of modal algebras. Therefore, in the algebraic setting, C-Post completeness for a given congruential modal logic amounts to minimality of the corresponding variety.

Let $\F=\<W,N\>$ be a neighborhood frame, and define $\F^{+} = \<\wp(W),\Box\>$, where $\Box X = \{w\in W: X\in N(w)\}$. Then $\F^{+}$ is a modal algebra, called the \emph{complex algebra} of $\F$, and validity of a formula $\varphi$ translates into validity of the equation $\varphi=\top$: 
\[
    \F\forces\varphi\qquad\text{ iff }\qquad \F^{+}\models \varphi=\top
\]
For any modal algebra $\A$, the set $\{\varphi: \A\models\varphi=\top\}$
is a congruential modal logic, and is called the logic of the algebra $\A$. Thus, if $\Lambda$ is the logic of the neighborhood frame 
$\F$, then 
\[
    \Lambda  = \{\varphi: \F\forces\varphi\} = \{\varphi:\F^{+}\models\varphi=\top\}.
\]
In particular, if the modal algebra $\A$ is isomorphic to some complex algebra $\F^{+}$ of a neighborhood frame $\F$, then the logic of the algebra $\A$ is neighborhood complete.
Finally, C-Post completeness of the logic of the algebra $\A$ translates to the property that $\A$ generates a non-trivial minimal variety. \\

Recall that an algebra is \emph{zero-generated} if it is generated as a subalgebra by the constant in the similarity type (in our case, $\bot$).
Below we generalise Theorem 3.1 in \cite{GyZMMOO-BSL} to an equivalence.

\begin{lemma}\label{zero-generated}
	Let $\V$ be a variety of modal algebras. Then $\V$ is a nontrivial minimal 	variety if and only if $\V=\V(\B)$ for some nontrivial, simple, zero-generated 	modal algebra $\B$.
\end{lemma}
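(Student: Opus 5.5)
The proof has two directions, and the key tool throughout is that modal algebras have Boolean reducts, so the variety is congruence distributive. Indeed, it is a discriminator-free but still Boolean-based variety: congruences of a modal algebra correspond to certain Boolean filters, namely those closed under $x\mapsto \Box x\leftrightarrow\Box y$-type conditions. This lets us apply Jónsson's lemma: if $\V=\V(\K)$, then every subdirectly irreducible member of $\V$ lies in $\H\S\P_u(\K)$.

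\emph{($\Leftarrow$)} Suppose $\B$ is nontrivial, simple and zero-generated, and let $\mathcal W\subseteq\V(\B)$ be a nontrivial subvariety. We take any nontrivial $\A\in\mathcal W$ and look at its zero-generated subalgebra $\Sg^\A(\emptyset)$. Every element of it is the value of a variable-free term $t$. Consider the map $t^\B\mapsto t^\A$. It is well defined, because $\A\in\V(\B)$, so every equation $s=t$ between closed terms true in $\B$ also holds in $\A$. It is a homomorphism from $\B$ onto $\Sg^\A(\emptyset)$. Since $\B$ is simple, its kernel is either $\Delta_\B$ or $\nabla_\B$. It cannot be $\nabla_\B$, since then $\bot=\top$ in $\A$, contradicting the nontriviality of $\A$ (a Boolean algebra with $\bot=\top$ is trivial). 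Hence $\B\cong\Sg^\A(\emptyset)\in\S(\A)\subseteq\mathcal W$, and so $\V(\B)\subseteq\mathcal W$. This gives minimality; nontriviality of $\V(\B)$ is clear.

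\emph{($\Rightarrow$)} Let $\V$ be nontrivial and minimal. We pick a nontrivial $\A\in\V$ and let $\A_0=\Sg^\A(\emptyset)$. By minimality, $\V(\A_0)=\V$. Now $\A_0$ is zero-generated but need not be simple. The plan is as follows.
\begin{enumerate}[label=(\arabic*)]
\item Take a maximal congruence $\theta$ of $\A_0$ (i.e.\ a maximal proper one). This exists by Zorn's lemma, because a congruence is proper iff it does not identify $\bot$ with $\top$, and that is a finitary condition, so unions of chains of proper congruences remain proper.
\item Set $\B=\A_0/\theta$. It is simple by the correspondence theorem, nontrivial, and zero-generated, being a homomorphic image of a zero-generated algebra.
\item Since $\B\in\V$ is nontrivial, minimality gives $\V(\B)=\V$.
\end{enumerate}

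The main obstacle is the existence of maximal congruences in step (1), that is, that properness is preserved under unions of chains. I would verify this directly: $(\bot,\top)\notin\bigcup_i\theta_i$ whenever it lies in no $\theta_i$. Everything else is routine. Note that this direction does not even need congruence distributivity or Jónsson's lemma; the argument is purely about zero-generated subalgebras together with the minimality of $\V$.
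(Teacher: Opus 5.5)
Your proof is correct. Its $(\Leftarrow)$ direction is essentially the paper's, and its $(\Rightarrow)$ direction takes a different route.

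In $(\Leftarrow)$, the paper identifies $\B$ with $\Fr_{\V}(\emptyset)$ and uses the canonical surjection onto $\Fr_{\mathbf{W}}(\emptyset)$. You instead use the closed-term map $t^{\B}\mapsto t^{\A}$ onto $\Sg^{\A}(\emptyset)$ for a nontrivial $\A\in\mathcal W$, which avoids free algebras. In both versions, simplicity of $\B$ together with $\bot\neq\top$ forces an isomorphism.

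In $(\Rightarrow)$, the paper takes $\B=\Fr_{\V}(\emptyset)$ and shows it is \emph{already} simple. A nontrivial proper quotient would satisfy a closed equation $s\approx t$ that fails in $\B$, and so would generate a nontrivial proper subvariety. You instead force simplicity by factoring $\A_0=\Sg^{\A}(\emptyset)$ by a maximal congruence, obtained by Zorn via $\theta=\nabla$ iff $\bot\mathrel{\theta}\top$. Minimality then gives $\V(\B)=\V$ directly, so your intermediate observation $\V(\A_0)=\V$ is not needed.

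Your route is the standard maximal-quotient argument. The paper's route is choice-free and gives the sharper, canonical fact that the zero-generated free algebra of a minimal variety is itself simple. Your own argument actually recovers this. Since $\A_0\in\V(\B)$, your $(\Leftarrow)$ map $t^{\B}\mapsto t^{\A_0}$ is well defined and is a left inverse of the quotient map $\A_0\to\B$. Hence $\theta=\Delta_{\A_0}$, and the Zorn step is dispensable.

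Your opening paragraph on congruence distributivity and J\'onsson's lemma plays no role in either direction. Its description of congruence filters is also garbled: the correct condition on a filter $F$ is that $x\leftrightarrow y\in F$ implies $\Box x\leftrightarrow\Box y\in F$. You can delete that paragraph.
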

\begin{proof}
	For the right to left direction let $\V = \V(\B)$ and consider the free  algebra $\Fr_{\V}(\emptyset)$ generated by the empty set in the variety $\mathbf{V}$ (it exists, because the similarity type contains nullary symbols). By the universal property of the free algebra there is a canonical homomorphism
	\[
		h: \Fr_{\V}(\emptyset) \to \B\,.
	\]
	The image is the subalgebra of $\B$ generated by the nullary symbols, which is the entire $\B$, hence $h$ is surjective. Since $\B$ generates the variety $\V$, if for two terms $t^{\B}=s^{\B}$, then this holds in the free algebra too: $t^{\Fr_{\V}(\emptyset)}=s^{\Fr_{\V}(\emptyset)}$, meaning that $h$ must be injective. Therefore, $h$ is an isomorphism $\Fr_{\V}(\emptyset)\cong  \B$. Let now $\mathbf{W}\subseteq\mathbf{V}$ be a subvariety. Then there is a canonical surjection 
	\[
		\Fr_{\V}(\emptyset) \twoheadrightarrow \Fr_{\mathbf{W}}(\emptyset)\,.
	\]
	But as $\Fr_{\V}(\emptyset)\cong  \B$, $\Fr_{\V}(\emptyset)$ is simple, and its homomorphic images are either trivial or itself. If $\mathbf{W}$ is nontrivial, then $\Fr_{\mathbf{W}}(\emptyset)$ is nontrivial, by properties of Boolean algebras, hence
	\[
		\Fr_{\mathbf{W}}(\emptyset)\cong \Fr_{\V}(\emptyset)\cong  \B\,.
	\]
	In this case $\B\in\mathbf{W}$ and therefore $\mathbf{W} = \V$, meaning that $\V$ has no proper nontrivial subvarieties.
	
	For the opposite direction suppose that $\V$ is nontrivial and minimal, and let $\B=\Fr_{\V}(\emptyset)$ be the zero-generated free algebra of $\V$. Since the similarity type of modal algebras contains the Boolean constants, this free algebra exists and is nontrivial. Thus $\V(\B)$ is a nontrivial subvariety of $\V$. By minimality of $\V$, it follows that $\V(\B)=\V$.
    It remains to prove that $\B$ is simple. Suppose, towards a contradiction, that $\B$ has a congruence $\Delta_{\B}\subsetneq\Theta\subsetneq\nabla_{\B}$. Put $\A =\B/\Theta$. Since $\Theta\neq\nabla_{\B}$, the algebra $\A$ is nontrivial.
    As $\B$ is zero-generated, every element of $\B$ is the value of some variable-free term. Since $\Theta\neq\Delta_{\B}$, there are variable-free terms $s$ and $t$ such that
    \[
        s^{\B}\neq t^{\B}     \qquad\text{and}\qquad
        (s^{\B},t^{\B})\in\Theta.
    \]
    Consequently, $\A\models s\approx t$. Hence $\V(\A)\models s\approx t$. On the other hand, $\B\not\models s\approx t$, and since $\B\in\V$, we have $\V\not\models s\approx t$. Therefore $\V(\A)\subsetneq\V$. But $\A$ is nontrivial, so $\V(\A)$ is a nontrivial proper subvariety of $\V$, contradicting the minimality of $\V$.
\end{proof}

The above lemma deserves special attention. Although it will appear crucial later in the proof, it also presents a serious obstacle to establishing the theorem we are after.
First, zero-generated algebras are always countable: they are homomorphic images of the free constant term algebra (which is countable for countable languages). Moreover, any algebra of the form $\F^+$ for some neighborhood frame is essentially a powerset algebra in the boolean reduct, so a countable algebra of this form is always finite. This implies that there are only countably many varieties of the form $\V(\F^+)$, where $\F^+$ is zero-generated. In order to establish the uncountability of the set of C-Post complete, neighborhood complete modal logics, we need to find a trick which allows us to go around this limitation. 

Consider $2^{\omega}$ as a topological space equipped with the product topology (the Cantor space). This topological space is compact, zero-dimensional (it has a clopen basis), complete metric space.

The following lemma is a special case of \cite[Theorem 3.1]{Ellis}, but for completeness we give a sketch of the proof.
\begin{lemma}\label{continuousextension}
    Let $D$ be a closed subset of $2^{\omega}$. 
    Every continuous map $g:D\to 2^{\omega}$ has a 
    continuous extension $G:2^{\omega}\to 2^{\omega}$.
\end{lemma}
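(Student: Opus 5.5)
The plan is to build a continuous retraction $r:2^{\omega}\to D$ and set $G=g\circ r$. Then $G$ is continuous as a composition of continuous maps, and $G$ extends $g$ because $r$ restricts to the identity on $D$. If $D=\emptyset$, any constant map works, so from now on I assume $D\neq\emptyset$. The whole argument then reduces to the classical fact that every nonempty closed subset of the Cantor space is a retract of it.

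To construct $r$, I will use the tree description of $2^{\omega}$. Let $T=\{x\restriction n: x\in D,\ n\in\omega\}$ be the set of finite initial segments of points of $D$. Since $D$ is nonempty, $T$ is a nonempty tree. It is pruned: every $s\in T$ has an extension in $T$. Moreover $D=[T]$, the set of infinite branches, because $D$ is closed. For $x\in 2^{\omega}$, define $r(x)$ bit by bit, keeping $r(x)\restriction n\in T$ at each stage. At stage $n$, with $s=r(x)\restriction n$ already fixed, I extend $s$ by the bit $x(n)$ if $s^\frown x(n)\in T$. Otherwise I extend by the other bit, which lies in $T$ because $T$ is pruned. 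The resulting branch lies in $[T]=D$. If $x\in D$, then every initial segment of $x$ is in $T$, so $r(x)=x$.

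For continuity, I would show by induction that $r(x)\restriction n$ depends only on $x\restriction n$. Hence $r$ is even $1$-Lipschitz with respect to the standard ultrametric $d(x,y)=2^{-\min\{n:x(n)\neq y(n)\}}$. This step is routine.

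I do not expect a real obstacle; the only delicate point is the prunedness of $T$, which is exactly what makes the "otherwise take the other bit" step possible. Closedness of $D$ is used only to identify $[T]$ with $D$. An alternative route would be to pick, for each $x$, a nearest point of $D$ in the ultrametric and break ties lexicographically, but checking continuity of that map is messier. The tree construction above gives continuity for free, so it is the one I would use.
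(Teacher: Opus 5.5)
Your proof is correct, but it takes a different route from the paper's. The paper builds no retraction. It works on the codomain one coordinate at a time. For each $n$, the set $C_n=\{x\in D: g(x)(n)=1\}$ is clopen in $D$. Compactness and zero-dimensionality of $2^{\omega}$ give a clopen $U_n\subseteq 2^{\omega}$ with $U_n\cap D=C_n$, and one puts $G(x)(n)=1$ iff $x\in U_n$. That argument uses only two facts: the target is a product of two-point discrete spaces, and the ambient space is compact and zero-dimensional. So it works unchanged for maps into any $2^{\kappa}$ from closed subsets of any Stone space, and it needs no separate treatment of $D=\emptyset$.

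Your argument instead exploits the structure of the domain. The pruned tree of $D$ gives an explicit, $1$-Lipschitz retraction $r:2^{\omega}\to D$, after which the codomain no longer matters. The composite $g\circ r$ extends any continuous map from $D$ into any topological space whatsoever, and it even satisfies $G(2^{\omega})=g(D)$. The price is that you rely essentially on the countable tree (ultrametric) structure of $2^{\omega}$ and on $D\neq\emptyset$. For instance, closed subsets of uncountable Cantor cubes need not be retracts, whereas the clopen-extension argument goes through there. For the way the lemma is used in the paper, either proof suffices.
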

\begin{proof}
    For every $n\in\omega$, let $C_n=\{x\in D:g(x)(n)=1\}$.
    Since the $n$th coordinate projection 
    $\pi_n:2^{\omega}\to 2$ is continuous, and $\{1\}$ is clopen
    in $2$, the set $C_n=(\pi_n\circ g)^{-1}(\{1\})$ is clopen in $D$.
    Every clopen subset of the closed subspace $D$ is of the form
    $D\cap U$ for some clopen $U\subseteq 2^{\omega}$ (because if $C$ is clopen in $D$, then $C$ and $D\setminus C$ are disjoint closed subsets of $2^{\omega}$, and the zero-dimensional compact space $2^{\omega}$ has a clopen set separating them). Thus, for
    each $n\in\omega$, one can choose a clopen set 
    $U_n\subseteq 2^{\omega}$ such that $D\cap U_n=C_n$.
    Define $G:2^{\omega}\to 2^{\omega}$ coordinatewise by
    \[
        G(x)(n)=
        \begin{cases}
            1&\text{ if } x\in U_n,\\
            0&\text{ if } x\notin U_n.
        \end{cases}
    \]
    For every $n$, the map $\pi_n\circ G:2^{\omega}\to 2$ is the
    characteristic function of the clopen set $U_n$, and hence is
    continuous. Since $2^{\omega}$ carries the product topology, it
    follows that $G$ is continuous. That $G$ extends $g$ is straightforward.
\end{proof}

\begin{theorem}\label{main result}
	There are exactly $2^{\aleph_0}$ congruential modal logics which are both C-Post complete and neighborhood complete. Each of the constructed logics is the logic of a single countable neighborhood frame.
\end{theorem}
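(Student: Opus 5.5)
The plan is to realize each logic as the logic of a complex algebra $\F_S^{+}=\<\wp(\omega),\Box_S\>$ of a neighborhood frame on $W=\omega$, one frame for each $S\subseteq\omega$. I identify $\wp(\omega)$ with the Cantor space $2^{\omega}$ and take $\Box_S=G_S$ to be a continuous map $2^\omega\to 2^\omega$. Any unary map on $\wp(\omega)$ is $\Box$ of some neighborhood frame (put $X\in N(w)$ iff $w\in\Box X$), so every such logic is neighborhood complete and the logic of a single countable frame. The upper bound $2^{\aleph_0}$ is trivial, since there are only countably many formulas. What remains is:
\begin{enumerate}[label=(\roman*)]
\item each $\V(\F_S^+)$ is minimal;
\item $S\neq S'$ gives different logics.
\end{enumerate}

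For (i) I use Lemma \ref{zero-generated} together with a density argument. Let $\B_S=\Sg^{\F_S^+}(\emptyset)$. The Boolean operations are continuous on $2^\omega$, being coordinatewise, and $G_S$ is continuous. Hence every term function $t^{\F_S^+}:(2^\omega)^n\to 2^\omega$ is continuous. If $\B_S$ is topologically dense in $2^\omega$, then $\B_S^n$ is dense in $(2^\omega)^n$, and an equation $s\approx t$ valid in $\B_S$ holds on a dense set, hence everywhere. Thus $\V(\B_S)=\V(\F_S^+)$, and it suffices to make $\B_S$ simple and dense. The natural target is for $\B_S$ to contain the finite--cofinite algebra, which is dense. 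So I design $G_S$ so that
\[
  G_S(\emptyset)=\{0\}, \qquad G_S(\{n\})=\{n+1\}\ \text{ for all } n.
\]
Then all singletons, and hence all finite and cofinite sets, are zero-definable. The remaining values of $G_S$ on finite and cofinite sets are arranged so that $\B_S$ is exactly that countable algebra, possibly enlarged by a controlled countable set of extra elements carrying the coding of $S$.

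Continuity is secured by defining each coordinate map $x\mapsto G_S(x)(m)$ as the characteristic function of a clopen set $U_m$ depending on finitely many coordinates of $x$. Equivalently, I prescribe $G_S$ on a closed set $D$ (for instance the finite sets together with their limit points, suitably chosen) and invoke Lemma \ref{continuousextension}.

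Simplicity: congruences of a modal algebra correspond to filters $F$ satisfying $a\leftrightarrow b\in F\Rightarrow \Box a\leftrightarrow\Box b\in F$. Dually, I work with ideals $I$ closed under $a\triangle b\in I\Rightarrow \Box a\triangle\Box b\in I$. Suppose $I$ is such an ideal and $I\neq\{\emptyset\}$.
\begin{enumerate}[label=(\alph*)]
\item $I$ contains some singleton $\{n\}$: a nonempty finite set lies below a singleton it contains, and a cofinite set contains one.
\item Then $\{n\}\triangle\emptyset\in I$ gives $\{n+1\}\triangle\{0\}\in I$, so $\{0\}\in I$.
\item Iterating $\{k\}\triangle\emptyset\in I\Rightarrow\{k+1\}\triangle\{0\}\in I$ puts every singleton, and hence every finite set, in $I$.
\item To conclude $\omega\in I$, I additionally require one finite pair whose $G_S$-images differ by a cofinite set, e.g. $G_S(\{0,1\})=\omega\setminus\{0\}$ or similar.
\end{enumerate}
The extra coding elements must also be handled, so that ideals containing them again contain a singleton.

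For (ii) I encode $n\in S$ by the value of $\Box$ on a specific zero-definable element. For example, let $e_n=\omega\setminus\{n\}$ and set $G_S(e_n)=\emptyset$ if $n\in S$ and $G_S(e_n)=\{n\}$ otherwise. Then a variable-free formula $\varphi_n$ (the term for $\lnot\Box e_n$) is valid in $\F_S^+$ iff $n\in S$, so the logics are pairwise distinct.

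The main obstacle is reconciling all the prescribed values with continuity. The points $e_n$ converge to $\omega$, and the singletons converge to $\emptyset$, so the prescriptions must be compatible at these limits. Indeed $G_S(\{n\})=\{n+1\}\to\emptyset$, whereas $G_S(\emptyset)=\{0\}$, which breaks continuity at $\emptyset$. So I would first adjust the scheme, for instance by shifting the coding to odd coordinates or using $G_S(\{n\})=\{0,n+1\}$ (whose limit $\{0\}$ matches $G_S(\emptyset)$), and then re-check the ideal argument of (b)--(d). This tuning of $G_S$ on the closed set $D$, before applying Lemma \ref{continuousextension}, is where I expect most of the work to lie.
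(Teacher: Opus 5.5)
There is a genuine gap. Your overall architecture is the paper's: identify $\wp(\omega)$ with $2^\omega$, take $\Box=G_S$ continuous via Lemma~\ref{continuousextension}, and use density of the zero-generated subalgebra $\B_S$ to transfer equations. Then apply Lemma~\ref{zero-generated}, prove simplicity by a congruence-ideal argument, and code $S$ by variable-free formulas.

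The gap is in the one step that needs real work. You never produce a $G_S$ that is at once continuous, makes every singleton zero-definable, and forces every nontrivial congruence ideal to contain $\omega$. The repair you propose breaks the simplicity argument.

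With $G_S(\emptyset)=\{0\}$ and $G_S(\{n\})=\{0,n+1\}$, continuity at $\emptyset$ and generation of the singletons are fine. But step (b) now yields only $G_S(\{n\})\triangle G_S(\emptyset)=\{n+1\}$, so the ideal is forced upward, never downward. Your extra value $G_S(\{0,1\})=\omega\setminus\{0\}$ does take an ideal containing $\{0\}$ or $\{1\}$ to $\omega$, via the pairs $\emptyset,\{0\}$ and $\{0\},\{0,1\}$. However, no prescribed pair forces an ideal generated by the singletons $\{m\}$, $m\geq 2$, to contain $\{0\}$, $\{1\}$ or a cofinite set. Since the extension is uncontrolled off $D$, simplicity of $\B_S$ does not follow.

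The missing idea is to separate the two jobs, and let each accumulate at a point where continuity is automatic.

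\begin{enumerate}[label=(\arabic*)]
\item \emph{Generating singletons.} The paper iterates $\Box$ on initial segments: $G_S(\{0,\dots,n-1\})=\{0,\dots,n\}$. This is continuous because these sets converge to $\omega$ and $G_S(\omega)=\omega$. Each $\{n\}$ is then a difference of consecutive iterates.
\item \emph{Simplicity.} The paper uses a separate zero-definable base point, the set $e$ of even numbers, obtained as $G_S(\omega\setminus\{0\})$. It sets $G_S(e)=\emptyset$, $G_S(e\triangle\{n+1\})=\{n\}$ and $G_S(e\triangle\{0\})=\omega$. Continuity at $e$ holds because $\{n\}\to\emptyset$. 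A congruence ideal containing $\{n+1\}$ is forced to contain $\{n\}$, hence $\{0\}$, hence $\omega$.
\item \emph{Coding.} The coding sits at the fixed point $\omega$: $G_S(\omega\setminus\{n+1\})\in\{\omega,\omega\setminus\{n+1\}\}$. Your coding forces $G_S(\omega)=\emptyset$ by continuity, which would clash with (1).
\end{enumerate}

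Two smaller remarks. Your worry about ``extra coding elements'' is unfounded: once all singletons lie in $\B_S$, every nonzero element of $\B_S$ lies above one of them, whatever else $\B_S$ contains. Also, do not try to prescribe $G_S$ on all finite sets or make $\B_S$ exactly the finite--cofinite algebra. The finite sets are dense in $2^\omega$, so ``finite sets together with their limit points'' is all of $2^\omega$ and not a usable $D$. Countably many values on a closed set with two accumulation points suffice.
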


\begin{proof}
	Write $\mathbf{0},\mathbf{1}\in 2^{\omega}$ for the 
    constant zero and constant one sequences, respectively. 
    For each $n\in\omega$, define the sequences 
    $a_n, c_n, u_n\in 2^{\omega}$ by
	\[
    	a_n(k)= \begin{cases}
	       1&\text{ if } k<n,\\
	       0&\text{ if } k\geq n,
	   \end{cases}
	   \qquad\qquad
    	c_n(k)= \begin{cases}
	       0&\text{ if } k=n,\\
	       1&\text{ if } k\neq n,
	   \end{cases}
	\]
	and
	\[
	   u_n(k)=\begin{cases}
	       1&\text{ if } k=n,\\
	       0&\text{ if } k\neq n.
	   \end{cases}
	\]
	Let $e\in 2^{\omega}$ be the sequence given by
	$e(k)=1$ iff $k$ is even, and, for each $n\in\omega$, let $e^{(n)}$
	be obtained from $e$ by changing its $n$th coordinate:
	\[
    	e_n(k)=\begin{cases}
	       1-e(n)&\text{ if } k=n,\\
	       e(k)&\text{ if } k\neq n.
	   \end{cases}
	\]
    Write
	\begin{align*}
	   D_0&=\{a_n: n\in\omega\}\cup\{\mathbf{1}\},\\
	   D_1&=\{c_n: n\in\omega\}\cup\{\mathbf{1}\},\\
	   D_2&=\{e\}\cup\{e_n: n\in\omega\},
	\end{align*}
	and $D=D_0\cup D_1\cup D_2$.
	In the product topology on $2^{\omega}$ the
    sequences $a_n$, $c_n$ and $e_n$ are convergent
    with $\lim a_n=\mathbf{1}$, $\lim c_n=\mathbf{1}$, 
    $\lim e_n= e$. Moreover, these limit points are the only accumulation points 
    of the corresponding sets $D_0$, $D_1$ and $D_2$. Hence $D_0$, $D_1$, $D_2$, and therefore $D$, are closed.

	Fix $S\subseteq\omega$ and define $g_S:D\to2^{\omega}$ by
	\begin{align}
    	g_S(a_n)&=a_{n+1}, & g_S(\mathbf{1})&=\mathbf{1},
            \label{eq:g-initial}\\
	   g_S(c_0)&=e, \label{eq:g-even}\\
	   g_S(c_{n+1})&= \begin{cases}
	       \mathbf{1}&\text{ if } n\in S,\\
	       c_{n+1} &\text{ if } n\notin S,
	       \end{cases}
	   & &n\in\omega, \label{eq:g-code}\\
	   g_S(e)&=\mathbf{0}, &
	   g_S(e_0)&=\mathbf{1},\label{eq:g-simple-zero}\\
	   g_S(e_{n+1})&=u_n, & &n\in\omega.
	   \label{eq:g-simple-step}
	\end{align}
	The only non-isolated points of $D$ are $\mathbf{1}$ and $e$. By
	\eqref{eq:g-initial}, 
    \[
        \lim g_S(a_n) = \lim a_{n+1} = \mathbf{1}.
    \]
    Also, $\lim g_S(c_n) = \mathbf{1}$, because apart from the first term, each $g_S(c_n)$ is either $\mathbf{1}$ or $c_n$, and $\lim c_n = \mathbf{1}$. Finally, 
	$\lim u_n = \mathbf{0} = g_S(e)$, so \eqref{eq:g-simple-step} shows that $\lim g_S(e_n) =  g_S(e)$. Thus $g_S$ is continuous. By 
    Lemma \ref{continuousextension}, $g_S$ has a continuous extension $G_S:2^{\omega}\to2^{\omega}$.

	Consider now $2^{\omega}$ as a Boolean algebra and put
	$\A_S = \< 2^{\omega},\land,\lor,\lnot, \mathbf{0},\mathbf{1}, G_S\>$.

    \begin{claim}\label{claim1}
        $\A_S$ is isomorphic to the complex algebra of a 
        neighborhood frame.
    \end{claim}
    \begin{proof}[of Claim \ref{claim1}]
    	We now construct a neighborhood frame whose complex algebra is isomorphic to $\A_S$. For $X\subseteq\omega$, let
	   $\chi_X\in2^{\omega}$ denote its characteristic function, and define
	   $\F_S=\<\omega, N_S\>$ by
	   \[
    	N_S(k)=	\{X\subseteq\omega: G_S(\chi_X)(k)=1\}.
	   \]
	   If $\Box_{S}$ denotes the modal operation of the complex algebra of $\F_S$, then for every $X\subseteq\omega$ and $k\in\omega$ 
       we have
	   \[
	       \chi_{\Box_{S}X}(k)=1
	           \quad\Longleftrightarrow\quad
	       X\in N_S(k)
	           \quad\Longleftrightarrow\quad
	       G_S(\chi_X)(k)=1.
	   \]
	   Hence
	   \begin{equation}\label{eq:complex-operation}
	       \chi_{\Box_{S}X} = G_S(\chi_X).
	   \end{equation}
	   Thus the map $X\mapsto\chi_X$ is an isomorphism from the complex modal algebra of $\F_S$ onto $\A_S$. 
    \end{proof}

	Let $\B_S$ be the zero-generated subalgebra of
	$\A_S$. As $a_0 = \mathbf{0}$, from \eqref{eq:g-initial} it
    follows that $G_S^n(\mathbf{0})=a_n$ for every $n\in\omega$. Therefore $a_{n+1}\wedge\neg a_n=u_n$ belongs to $B_S$ for every $n$. It follows
	that every sequence with finite support, and hence also every sequence
	whose zero-set is finite, belongs to $B_S$. In particular, $B_S$ is
	dense in $2^{\omega}$: every nonempty basic open set specifies only
	finitely many coordinates and therefore contains a sequence with finite
	support.

    \begin{claim}\label{claim2}
        The algebras $\A_S$ and $\B_S$ have the same
        equational theory. 
    \end{claim}
    \begin{proof}[of Claim \ref{claim2}]
    	One direction is straightforward because $\B_S$ is a 
        subalgebra of $\A_S$. For the converse, let
	    $\varphi(p_1,\ldots,p_m) = \top$ be valid in $\B_S$. 
        Its evaluation map
	   \[
	       \widehat\varphi:(2^{\omega})^m\to2^{\omega}
	   \]
	   is continuous, because the Boolean operations and $G_S$ are continuous. The singleton $\{\mathbf{1}\}$ is closed 
        in $2^{\omega}$, so $\widehat\varphi^{-1}(\{\mathbf{1}\})$ 
        is closed. Since $\varphi = \top$ is valid
	   in $\B_S$, the closed set $\widehat\varphi^{-1}(\{\mathbf{1}\})$
        contains the dense subset $B_S^m$.
        Consequently, 
        $\widehat\varphi^{-1}(\{\mathbf{1}\}) = (2^{\omega})^m$, yielding that the equation $\varphi=\top$ is valid in $\A_S$. 
    \end{proof}

    \begin{claim}\label{claim3}
        $\B_S$ is simple. 
    \end{claim}
    \begin{proof}[of Claim \ref{claim3}]
        Let $\theta$ be a non-identity congruence of $\B_S$, and let
	   \[
	       J=\{a\in B_S:a\mathrel\theta\mathbf{0}\}.
	   \]
	   The set $J$ is a nonempty Boolean ideal. 
       Choose a nonzero $a\in J$. There is some $n\in\omega$ with $a(n)=1$. Since $u_n\in B_S$ and $u_n\leq a$, we have $u_n\in J$.

	   The element $e$ belongs to $B_S$, because
       $e=G_S(c_0)$ by \eqref{eq:g-even}. 
       Let $\oplus$ denote the symmetric-difference operation
       $x\oplus y=(x\wedge\neg y)\vee(\neg x\wedge y)$.
       For every $n$ we have $e\oplus e_n = u_n$. Hence, 
       if $u_n\in J$, then $e\mathrel\theta e_n$.
       If $n>0$, applying $G_S$ and using
       \eqref{eq:g-simple-zero}--\eqref{eq:g-simple-step} 
       gives $\mathbf{0}\mathrel\theta u_{n-1}$.
       Iterating this argument yields $u_0\in J$. Hence
       $e\mathrel\theta e_0$, and another application of $G_S$, 
       together with \eqref{eq:g-simple-zero}, gives
       $\mathbf{0}\mathrel\theta\mathbf{1}$.
       Thus $\theta$ is the largest congruence, and therefore 
       $\B_S$ is simple.
    \end{proof}

    Let us write 
    \[
        \Lambda_S = \{ \varphi: \A_S\models \varphi=\top \}.
    \]
    Then $\Lambda_S$ is a congruential modal logic. By Claim \ref{claim1}, $\Lambda_S$ is complete with respect to the 
    single countable neighborhood frame $\F_S$. Since $\B_S$
    is zero-generated, and simple (by Claim \ref{claim3}), Lemma
    \ref{zero-generated} implies that $\V(\B_S)$ is a nontrivial, 
    minimal variety. This means that the logic corresponding
    to $\B_S$ is C-Post complete. As $\A_S$ and $\B_S$
    satisfy the same equations (by Claim \ref{claim2}), 
    $\Lambda_S$ is the logic corresponding to $\B_S$, and therefore
    $\Lambda_S$ is C-Post complete.
    To complete the proof of the present theorem, 
    it remains to show that the logics $\Lambda_S$ are pairwise distinct
    for distinct subsets $S\subseteq \omega$.

    \begin{claim}\label{claim4}
        For $S\neq T$ we have $\Lambda_S\neq\Lambda_T$.
    \end{claim}
    \begin{proof}[of Claim \ref{claim4}]
        Define the variable-free formulas
	    \[
	       \beta_n=\Box^n\bot,
	           \qquad
	       \delta_n=\beta_{n+1}\wedge\neg\beta_n.
	    \]
	    By \eqref{eq:g-initial}, their values in $\A_S$ are
        $\beta_n^{\A_S}=a_n$, and $\delta_n^{\A_S}=u_n$.
	    Put $\chi_n=\Box\neg\delta_{n+1}$. Since
	    $\neg u_{n+1}=c_{n+1}$, \eqref{eq:g-code} yields
        \[
	       \chi_n\in\Lambda_S \quad\Longleftrightarrow\quad	n\in S.
        \]
    	Thus $S\neq T$ implies $\Lambda_S\neq\Lambda_T$. 
    \end{proof}
Theorem \ref{main result} follows as a consequence of Claims \ref{claim1}--\ref{claim4}.
\end{proof}

\noindent Theorem \ref{main result} together with \cite[Theorem 4.1]{Krawczyk2026} answer two clusters of questions from \cite[p. 299]{Fritz}. 

\begin{quote}
{\it Two clusters of questions were already mentioned above: First, how many
modal logics Post complete in the set of congruential modal logics are the logic
of a class of neighborhood frames, and how many (if any) are not?}
\end{quote}
As it has been shown, there is an abundance of logics of each type: the answer to both of Fritz's questions is `continuum many'. 
\section*{Funding}
The research of the second author has been funded by NCN (Narodowe Centrum Nauki), grant no. 2023/49/N/HS1/04070.

\end{document}